\documentclass[12pt,letterpaper]{article}

%-------Packages---------
\usepackage{amssymb,amsfonts,amscd,amsthm}
\usepackage[all,arc]{xy}
\usepackage{enumerate, bbm}
\usepackage{mathrsfs}
\usepackage{tikz}
\usepackage[left=0.9in,top=0.9in,right=0.9in,bottom=0.9in]{geometry}
\parskip=6pt
\parindent=0pt
\usepackage{mathtools}
\usepackage{hyperref}
\usepackage{color}%colors
\usepackage{graphicx}
\usepackage{enumitem} %Allows resuming enumeration and numbering by section
\graphicspath{ {TexImages/} }

%--------Theorem Environments--------
%theoremstyle{plain} --- default
\newtheorem{thm}{Theorem}[section]
\newtheorem{cor}[thm]{Corollary}
\newtheorem{prop}[thm]{Proposition}
\newtheorem{lem}[thm]{Lemma}

\theoremstyle{definition}

\newtheorem{quest}[thm]{Question}

\newtheorem{conj}[thm]{Conjecture}

\iffalse
If want equations to be numbered same way as theorems:
\makeatletter
\let\c@equation\c@thm
\makeatother
\numberwithin{equation}{section}
\fi

\hypersetup{
	colorlinks,
	citecolor=blue,
	filecolor=blue,
	linkcolor=blue,
	urlcolor=blue,
	linktocpage
}

\setenumerate[1]{label=\thesection.\arabic*.} %These two make your enumerated lists start with the section number
\setenumerate[2]{label*=\arabic*.} %Makes subparts labeled by numbers instead of letters.
\setlist[enumerate]{itemsep=2ex, topsep=2ex} %spaces out enumerate/itemize better
\setlist[itemize]{itemsep=2ex, topsep=2ex}

%Abbreviations
\newcommand{\R}{\mathbb{R}}
\newcommand{\E}{\mathbb{E}}

\newcommand{\ep}{\epsilon}
\newcommand{\lam}{\lambda}
\renewcommand{\l}{\left}
\renewcommand{\r}{\right}
\newcommand{\Del}{\Delta}
\newcommand{\half}{\frac{1}{2}}

\newcommand{\sm}{\setminus}
\newcommand{\sub}{\subseteq}

%Commands Taking Input
\renewcommand{\c}[1]{\mathcal{#1}}
\newcommand{\ol}[1]{\overline{#1}}
\newcommand{\rec}[1]{\frac{1}{#1}}
\newcommand{\f}[2]{\frac{#1}{#2}}
\newcommand{\norm}[1]{\lVert#1\rVert}
%For argument just put name of picture that is in file called "TexImages"
\newcommand{\mr}[1]{\mathrm{#1}}

%Notes to Self

%Text symbols

\newcommand{\Av}[3]{\mathbf{\mr{R}}_{#3}^{#2}(#1)}
\newcommand{\AvN}[3]{\mathbf{\tilde{\mr{R}}}_{#3}^{#2}(#1)}
\newcommand{\e}{e}
%\renewcommand{\ignore}[1]{}

%If you delete your custom commands, note that the following will seem not to give errors but still need to be dealt with: \l, \r, \c, \b, \1

\title{An Averaging Processes on Hypergraphs}
\author{Sam Spiro\footnote{Dept.\ of Mathematics, UCSD {\tt sspiro@ucsd.edu}.}}
\date{\today}
\iffalse \address
{Department of Mathematics \newline \indent
	University of California, San Diego \newline \indent
	La Jolla, CA, 92093-0112, USA}\fi
\begin{document}
\maketitle

\begin{abstract}
	Consider the following iterated process on a hypergraph $H$.  Each vertex $v$ starts with some initial weight $x_v$.  At each step, uniformly at random select an edge $e$ in $H$, and for each vertex $v$ in $e$ replace the weight of $v$ by the average value  of  the vertex weights over  all vertices in $e$. This is a generalization of an interactive process on graphs which was first introduced by  Aldous and Lanoue.   In this paper  we use the eigenvalues of a Laplacian for hypergraphs to bound the rate of convergence for this iterated averaging process.
\end{abstract}
\section{Introduction}
The following iterated process on a graph $G$ was introduced by Aldous and Lanoue~\cite{AL}. 

{\it Initially, assign $n$ real numbers $x_1,\ldots,x_n$ to the vertices of $G$.  At each step uniformly at random select $\{i,j\}\in E(G)$ and replace both $x_i$ and $x_j$ with their average value $(x_i+x_j)/2$.  }

As noted in \cite{AL},  this process was motivated by the study of social dynamics and  interactive particle systems.
Recently Chaterjee, Diaconis, Sly, and Zhang~\cite{CD} further investigated this process  in response to a question of Bourgain and a problem arising in quantum computing. In particular they obtained sharper estimates for the rate of convergence for the case of $G$ being a complete graph.

There are various procedures  similar to the above process, such as the gossip algorithms studied by Shah~\cite{S},  the distributed consensus algorithms studied by Olshevsky and Tsitsiklis~\cite{OT}, and various restricted averaging processes 
 \cite{AC, Ben} as well as numerous `smoothing' or `renewal'  models in statistics \cite{Cha, Fe}. In addition, there are numerous random processes sharing similar flavors and methods, such as exchanging processes on permutations and card shuffling \cite{Dia}.

In this paper, we consider the following averaging process on a hypergraph $H$, and we emphasize that we place no restriction on the multiplicity or size of any edge of $H$.

{\it Initially, assign $n$ real numbers $x_1,\ldots,x_n$ to the vertices of $H$.  At each step uniformly at random select an edge $\e\in E(H)$, and for each $i\in \e$ replace $x_i$ with the average value $|\e|^{-1}\sum_{j\in \e} x_j$.} 

Let us formalize this process somewhat. Recall that a \textit{hypergraph} $H$ is a set of vertices $V(H)$ together with a multiset $E(H)$ of subsets of $V(H)$ which are called edges.   Given a hypergraph $H$, let $x$ be a real-valued vector indexed by $V(H)$, which we call a \textit{weight vector} of $H$.  Define the (random) vector $\Av{x}{}{H}$ by choosing an edge $\e$ uniformly at random from $E(H)$, and then setting $\Av{x}{}{H}_u=x_u$ if $u\notin \e$ and $\Av{x}{}{H}_u=|\e|^{-1}\sum_{v\in \e} x_v$ otherwise.   Recursively define $\Av{x}{t}{H}=\Av{\Av{x}{t-1}{H}}{}{H}$.  Equivalently, $\Av{x}{t}{H}$ is the random vector obtained by uniformly generating a sequence of $t$ edges and then performing the averaging process for each edge sequentially.  When $H$ is understood we simply write $\Av{x}{t}{}$.

Given a weight vector $x$ of $H$ with $|V(H)|=n$, define the vector $\ol{x}=(\rec{n}\sum x_u,\ldots,\rec{n}\sum x_u)$.   We wish to determine how quickly $\Av{x}{t}{}$ converges to $\ol{x}$ in various norms. In the graph setting, Aldous and Lanoue~\cite{AL} bounded this rate of convergence in terms of the second smallest eigenvalue of the (combinatorial) Laplacian.  There are many ways to generalize the Laplacian for hypergraphs~\cite{H1,H2,H3}, and in this paper we use a generalization which was introduced by Rodr\'{i}guez \cite{R}.

Given a hypergraph $H$, the \textit{codegree} $d(u,v)$ of two vertices $u\ne v$ is defined to be the number of edges $\e$ containing both $u$ and $v$ in $H$.  If $H$ is an $n$-vertex hypergraph, the \textit{codegree Laplacian} $L(H)$ is the $n\times n$ matrix with $L(H)_{u,v}=-d(u,v)$ if $u\ne v$ and $L(H)_{u,u}=\sum_{v\ne u} d(u,v)$. For example, if $n=4$ and $E(H')=\{\{1,2\},\{1,2\},\{2,3,4\}\}$, then \[L(H')=\begin{bmatrix}
2 & -2 & 0 & 0\\ 
-2 & 4 & -1 & -1\\ 
0 & -1 & 2 & -1\\ 
0 & -1 & -1 & 2
\end{bmatrix}.\]
Note that when $H$ is a graph this reduces to the Laplacian matrix of $H$.  In fact, $L(H)$ can be defined to be the Laplacian for the multi-graph $G_H$ obtained by placing a clique on all of the vertices of each $\e\in E(H)$.  For example, with $H'$ as above, $G_{H'}$ is the multi-graph displayed below.

\begin{center}
\begin{tikzpicture}[scale=0.4]

\node at (0,0) {$\bullet$};
\node at (0,1) {1};
\node at (4,0) {$\bullet$};
\node at (4,1) {2};
\node at (6,2) {$\bullet$};
\node at (7,2) {3};
\node at (6,-2) {$\bullet$};
\node at (7,-2) {4};

\draw (4,0)--(6,2);
\draw (4,0)--(6,-2);
\draw (6,2)--(6,-2);
\path (0,0) edge [bend left] (4,0);
\path (0,0) edge [bend right] (4,0);
\end{tikzpicture}
\end{center}
\subsection{Main Results}

It is clear that $L(H)$ is a real symmetric matrix, and hence it has $n$ real eigenvalues which we denote by $\lam_0(H) \leq \lam_1(H)\le \cdots\le \lam_{n-1}(H)$.  With this in mind we can state our main result.
\begin{thm}\label{T-2}
	Let $H$ be a hypergraph with $|\e|\ge r$ for all $\e\in E(H)$.  Then for all weight vectors $x$ and $t\ge 1$,
	\[\E\l[\norm{\Av{x}{t}{}-\ol{x}}_2^2\r]\le \l(1-\f{\lam_1(H)}{r|E(H)|}\r)^t\norm{x-\ol{x}}_2^2.\]
\end{thm}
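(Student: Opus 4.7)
The plan is to reduce to a one-step contraction estimate and iterate. Since replacing the coordinates on an edge by their common average leaves the coordinate sum unchanged, $\ol{\Av{x}{t}{H}} = \ol{x}$ almost surely; translating by $\ol{x}$ lets me assume $\ol{x} = 0$. Then, by the tower property, the $t$-step bound follows by induction on $t$ from the one-step estimate
\[\E\l[\norm{\Av{x}{}{H}}_2^2\r] \le \l(1 - \f{\lam_1(H)}{r|E(H)|}\r)\norm{x}_2^2 \quad (\ol{x} = 0),\]
since the vector after $t-1$ steps also has mean zero and so the one-step bound applies at each stage.

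For the one-step estimate, let $R_{\e}(x)$ denote the deterministic averaging of $x$ on a fixed edge $\e$. Since $R_{\e}(x)$ agrees with $x$ outside $\e$ and equals $\bar{x}_\e := |\e|^{-1}\sum_{v\in \e}x_v$ on $\e$, the standard variance identity yields
\[\norm{x}_2^2 - \norm{R_\e(x)}_2^2 \;=\; \sum_{u\in \e}(x_u - \bar{x}_\e)^2 \;=\; \f{1}{|\e|}\sum_{\{u,v\}\subset \e}(x_u - x_v)^2,\]
the second equality coming from expanding $\sum_{u,v\in\e}(x_u-x_v)^2$. Averaging over $\e$ uniform in $E(H)$ gives the expected decrement
\[\E\l[\norm{x}_2^2 - \norm{\Av{x}{}{H}}_2^2\r] \;=\; \f{1}{|E(H)|}\sum_{\e\in E(H)}\f{1}{|\e|}\sum_{\{u,v\}\subset \e}(x_u-x_v)^2.\]

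The key observation is that the ``unweighted'' double sum is exactly a Laplacian quadratic form: swapping the order of summation,
\[\sum_{\e}\sum_{\{u,v\}\subset \e}(x_u-x_v)^2 \;=\; \sum_{u<v} d(u,v)(x_u-x_v)^2 \;=\; x^T L(H)\, x.\]
The edge-size hypothesis then lets me compare each $1/|\e|$ to $1/r$ uniformly, and Rayleigh's principle applied to $L(H)$ (using that $\ol{x} = 0$ places $x$ in the orthogonal complement of the all-ones null eigenvector) yields $x^T L(H)\, x \ge \lam_1(H)\norm{x}_2^2$, which closes the one-step estimate.

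I expect the main challenge to be notational bookkeeping rather than conceptual: cleanly establishing the variance identity with the correct $|\e|^{-1}$ factor, and carrying that factor through the averaging before collecting it into the Laplacian quadratic form. The overall strategy---reduce to one step, express the expected variance decrement as a Laplacian-type quadratic form, and invoke Rayleigh---is the standard one for such averaging processes, and no deeper tools appear to be needed.
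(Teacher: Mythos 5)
Your proposal is correct and follows essentially the same route as the paper: reduce to the mean-zero case, establish the one-step variance-decrement identity (the paper's Lemma~\ref{L-Step}), bound each $1/|\e|$ by $1/r$, collect the double sum into the codegree Laplacian quadratic form, and apply the Rayleigh quotient characterization of $\lam_1(H)$ (Lemma~\ref{L-lam2}), then iterate over $t$. No substantive differences.
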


With this we will bound the rate of convergence for connected hypergraphs.  We recall that a hypergraph $H$ is \textit{connected} if for every non-empty subset $S\subsetneq V(H)$ there exists an edge $e\in E(H)$ containing a vertex in $S$ and $V(H)\sm S$.
\begin{cor}\label{C-Main}
	Let $H$ be a connected hypergraph on $n$ vertices such that $|\e|\ge r$ for all $\e\in E(H)$.  For all weight vectors $x$,  the iterated averaging process converges to its average value $\bar{x}$ as follows:
	\begin{align*}
	\E\l[\norm{\Av{x}{t}{}-\ol{x}}_2\r]\le e^{-c}\norm{x-\ol{x}}_2&\textrm{ whenever  } t \geq 2c\cdot \f{ r |E(H)|}{\lam_1(H)},\\ 
	\E\l[\norm{\Av{x}{t}{}-\ol{x}}_1\r]\le e^{-c}\norm{x-\ol{x}}_2&\textrm{ whenever }t \geq (\log(n)+2c)\cdot \f{r|E(H)|}{\lam_1(H)}.
	\end{align*}
\end{cor}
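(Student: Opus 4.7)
The plan is to deduce the corollary directly from Theorem~\ref{T-2} by combining three elementary inequalities: Jensen, the scalar bound $1-y\le e^{-y}$, and the norm comparison $\norm{y}_1\le\sqrt{n}\norm{y}_2$ on $\R^n$.

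First I would pass from the squared $L^2$ bound in Theorem~\ref{T-2} to an $L^2$ bound on the expectation. By Jensen's inequality (or Cauchy--Schwarz applied to $1\cdot \norm{\Av{x}{t}{}-\ol{x}}_2$),
\[
\E\l[\norm{\Av{x}{t}{}-\ol{x}}_2\r]\le \sqrt{\E\l[\norm{\Av{x}{t}{}-\ol{x}}_2^2\r]}\le \l(1-\f{\lam_1(H)}{r|E(H)|}\r)^{t/2}\norm{x-\ol{x}}_2.
\]
Then the inequality $1-y\le e^{-y}$ gives the cleaner bound $e^{-t\lam_1(H)/(2r|E(H)|)}\norm{x-\ol{x}}_2$. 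Setting the exponent to be at most $-c$ yields exactly the threshold $t\ge 2c\cdot r|E(H)|/\lam_1(H)$, which proves the first inequality.

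For the $L^1$ bound I would reduce to the $L^2$ case via Cauchy--Schwarz: for any $y\in\R^n$,
\[
\norm{y}_1=\sum_u |y_u|\le \sqrt{n}\,\norm{y}_2.
\]
Applying this inside the expectation and chaining with the $L^2$ bound above gives
\[
\E\l[\norm{\Av{x}{t}{}-\ol{x}}_1\r]\le \sqrt{n}\,e^{-t\lam_1(H)/(2r|E(H)|)}\norm{x-\ol{x}}_2,
\]
and the condition $t\ge(\log(n)+2c)\cdot r|E(H)|/\lam_1(H)$ is precisely what turns $\sqrt{n}\,e^{-t\lam_1(H)/(2r|E(H)|)}$ into $e^{-c}$, since $\sqrt{n}\,e^{-\log(n)/2-c}=e^{-c}$.

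The one genuinely substantive point I would want to check carefully is that $\lam_1(H)>0$ when $H$ is connected, so that the statements are not vacuous. This should follow from the observation made just after the definition of $L(H)$: since $L(H)$ is the combinatorial Laplacian of the multigraph $G_H$ obtained by replacing each edge of $H$ with a clique on its vertex set, and since $H$ is connected iff $G_H$ is connected, standard spectral graph theory gives that $0=\lam_0(H)<\lam_1(H)$. Aside from this sanity check, the corollary is a clean consequence of Theorem~\ref{T-2} and I expect no real obstacles in the derivation.
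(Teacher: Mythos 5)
Your proposal is correct and follows essentially the same route as the paper: Jensen plus Theorem~\ref{T-2} plus $1-y\le e^{-y}$ for the $\ell_2$ bound, and the comparison $\norm{y}_1\le\sqrt{n}\norm{y}_2$ for the $\ell_1$ bound, with the positivity of $\lam_1(H)$ for connected $H$ supplied by Lemma~\ref{L-lam2}. No gaps.
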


Chaterjee, Diaconis, Sly, and Zhang~\cite{CD} showed that these bounds are essentially tight when $H$ is the complete graph $K_n$.  

One can obtain concentration results for certain hypergraphs.  To this end, a hypergraph $H$ is said to be \textit{codegree regular} if there exists some $d$ with $d(u,v)=d$ for all $u\ne v$.  Examples of codegree regular hypergraphs include $K_n^{(r)}$ (the hypergraph on $\{1,\ldots,n\}$ with edge set consisting of every set of size $r$) and Steiner systems (hypergraphs where every pair is covered by exactly one edge).  Also recall that $H$ is \textit{$r$-uniform} if $|\e|=r$ for all $\e\in E(H)$.

\begin{thm}\label{T-CReg}
	Let $H$ be an $n$-vertex $r$-uniform hypergraph which is codegree regular. Then for all weight vectors $x$ and $t\ge 1$, \begin{equation}\label{E-Con}
		\E\l[\norm{\Av{x}{t}{}-\ol{x}}_2^2\r]=\l(1-\f{r-1}{n-1}\r)^t\norm{x-\ol{x}}_2^2.
	\end{equation}  Moreover, $\lim \norm{\Av{x}{t}{}-\ol{x}}_2^2/\l(1-\f{r-1}{n-1}\r)^t$ exists and is finite almost surely.
\end{thm}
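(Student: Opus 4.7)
The plan has two parts: (a) prove the equality (\ref{E-Con}) by a one-step conditional computation which I iterate via the tower property, and (b) deduce the almost-sure convergence from the martingale convergence theorem applied to an appropriately rescaled process. I begin by recording the structure of a codegree-regular $r$-uniform hypergraph: if $d$ denotes the common codegree, then every vertex has degree $k=(n-1)d/(r-1)$ and $|E(H)|=nk/r$, while $L(H)=d(nI-J)$ has eigenvalues $0$ (simple) and $dn$ (multiplicity $n-1$). Hence Theorem \ref{T-2} already predicts the rate $1-\lam_1(H)/(r|E(H)|)=1-(r-1)/(n-1)$; the content of (a) is to upgrade this bound from $\le$ to $=$.

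For the one-step calculation, fix a weight vector $x$ with mean $\mu$ and reduce to $\mu=0$ by translation. If the randomly selected edge is $\e$, then
\[\norm{\Av{x}{}{}-\ol{x}}_2^2=\norm{x}_2^2-\sum_{v\in \e}x_v^2+r\bar{x}_\e^2,\]
where $\bar{x}_\e=r^{-1}\sum_{v\in \e}x_v$. Averaging uniformly over $\e\in E(H)$, codegree regularity makes both sums tractable: $\sum_\e\sum_{v\in \e}x_v^2=k\norm{x}_2^2$, and expanding the square gives $\sum_\e\bar{x}_\e^2=r^{-2}\l(k\norm{x}_2^2+d\sum_{v\neq w}x_vx_w\r)=r^{-2}(k-d)\norm{x}_2^2$ after using $\sum_{v\neq w}x_vx_w=-\norm{x}_2^2$. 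Substituting the values of $k$ and $|E(H)|$ collapses the resulting expression to $\l(1-(r-1)/(n-1)\r)\norm{x}_2^2$, and crucially this contraction factor is independent of $x$. Because averaging preserves the global mean (so $\ol{\Av{x}{t-1}{}}=\ol{x}$), iterating the identity via the tower property then yields (\ref{E-Con}).

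For (b), set $Y_t:=\norm{\Av{x}{t}{}-\ol{x}}_2^2\bigm/\l(1-(r-1)/(n-1)\r)^t$. Because the one-step contraction is exact and state-independent, applying the computation above conditionally on the sigma-algebra $\c{F}_{t-1}$ generated by the first $t-1$ edge selections gives $\E[Y_t\mid \c{F}_{t-1}]=Y_{t-1}$. Thus $(Y_t)_{t\ge 0}$ is a nonnegative martingale, and the martingale convergence theorem provides an almost-sure finite limit. There is no real obstacle beyond the one-step computation; its payoff is that the resulting contraction factor depends only on $n$ and $r$, which is precisely what simultaneously gives exact equality for all $t$ and produces a martingale.
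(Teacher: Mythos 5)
Your proof is correct, and its overall architecture matches the paper's: an exact, state-independent one-step contraction, iterated via the tower property (using that the mean is preserved), followed by the martingale convergence theorem applied to the rescaled squared norm. The difference lies in how the one-step identity is obtained. The paper routes it through Lemma~\ref{L-Step}, which for an $r$-uniform $H$ gives $\E\l[\norm{x}_2^2-\norm{\Av{x}{}{}}_2^2\r]=\rec{r|E(H)|}\sum_{u,v}d(u,v)(x_u-x_v)^2$, then invokes Lemma~\ref{L-Ral} to rewrite this as $\rec{r|E(H)|}x^TL(H)x$, observes that $L(H)=d(nI-J)$ makes every mean-zero vector an eigenvector with eigenvalue $dn$, and finally double-counts pair--edge incidences to evaluate $|E(H)|$. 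You instead expand $\norm{\Av{x}{}{}}_2^2$ directly for a fixed edge and average, using codegree regularity to evaluate $\sum_{\e}\sum_{v\in\e}x_v^2$ and $\sum_{\e}\l(\sum_{v\in\e}x_v\r)^2$; your derivation of the common degree $k=(n-1)d/(r-1)$ is essentially the same double count the paper applies to $|E(H)|$, and your arithmetic checks out. The two computations are equivalent in substance; yours is more self-contained and elementary, while the paper's makes explicit the link to $\lam_1(H)=dn$ and hence to why the bound of Theorem~\ref{T-2} becomes an equality here. One point in your favor: your martingale $Y_t=\norm{\Av{x}{t}{}-\ol{x}}_2^2\big/\l(1-\f{r-1}{n-1}\r)^t$ is normalized correctly and is exactly the quantity whose limit the theorem asserts, whereas the paper's $S^t(x)$ rescales the \emph{vector} by $\l(1-\f{r-1}{n-1}\r)^{-t}$, so its squared norm carries the factor to the power $-2t$ and is not literally a martingale as written; the intended object is clearly the one you define.
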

The conclusions of Theorem~\ref{T-CReg} do not hold in general if $H$ is not codegree regular, see Propositions~\ref{P-Path1} and \ref{P-Path2}.

Lastly, we note that the hypergraph averaging process can be used to model other averaging processes for which our results also apply.  In particular, we define the \textit{neighborhood averaging process} as follows.  Let $G$ be a simple graph and define the neighborhood $N_G(u)$ of a vertex $u\in V(G)$ to be the set of vertices adjacent to $u$ in $G$.  For $x$ a weight vector of $G$, define the weight vector $\AvN{x}{}{G}$ by uniformly at random selecting some $u\in V(G)$, and then setting $\AvN{x}{}{G}_v=x_v$ if $v\notin N_G(u)$ and $\AvN{x}{}{G}_v=\rec{|N_G(u)|}\sum_{w\in N_G(u)} x_w$ otherwise. We iteratively define $\AvN{x}{t}{G}=\AvN{\AvN{x}{t-1}{G}}{}{G}$ and denote this simply by $\AvN{x}{t}{}$ whenever $G$ is understood.

\begin{thm}\label{T-N}
	Let $G$ be an $n$-vertex $d$-regular graph and define $\lam':=\min\{\lam_1(G),2d-\lam_{n-1}(G)\}$.  Then for all weight vectors $x$ and $t\ge 1$,  
	\[\E\l[\norm{\AvN{x}{t}{}-\ol{x}}_2^2\r]\le\l(1-\f{\lam'(2d-\lam')}{dn}\r)^t\norm{x-\ol{x}}_2^2.\]
\end{thm}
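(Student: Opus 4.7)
The plan is to realize the neighborhood averaging process on $G$ as an instance of the hypergraph averaging process, and then invoke Theorem~\ref{T-2}. Let $H$ be the hypergraph on $V(G)$ whose edge multiset is $\{N_G(u) : u \in V(G)\}$, counted once per vertex of $G$. Because $G$ is $d$-regular, every edge of $H$ has size $r = d$ and $|E(H)| = n$. Selecting $u \in V(G)$ uniformly at random and averaging the $x$-values on $N_G(u)$ is exactly the same as selecting an edge uniformly from $E(H)$ and averaging its vertices, so $\AvN{x}{t}{G}$ and $\Av{x}{t}{H}$ are equidistributed.

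Next I will express $L(H)$ in terms of the adjacency matrix $A = A(G)$. For $u \neq v$, the codegree $d_H(u,v)$ counts vertices $w$ with $\{u,v\} \subseteq N_G(w)$, i.e., $|N_G(u) \cap N_G(v)| = (A^2)_{u,v}$. For the diagonal, each of the $d$ edges $N_G(w)$ containing $u$ (obtained by letting $w$ range over $N_G(u)$) contributes $|e|-1 = d-1$, giving $L(H)_{u,u} = d(d-1)$. Since $(A^2)_{u,u} = d$ in a $d$-regular graph, these assemble into the compact identity $L(H) = d^2 I - A^2$. Hence the eigenvalues of $L(H)$ are $d^2 - \mu_i^2 = \lam_i(G)(2d - \lam_i(G))$, where $\mu_i = d - \lam_i(G)$ denote the eigenvalues of $A$, with the eigenvalue $0$ arising from $\mathbf{1}$ via $A\mathbf{1} = d\mathbf{1}$.

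Set $f(\lam) := \lam(2d-\lam) = d^2 - (\lam - d)^2$; this is concave on $[0,2d]$ with maximum $d^2$ at $\lam = d$. Then $\lam_1(H) = \min_{i\ge 1} f(\lam_i(G)) = \min\bigl(f(\lam_1(G)), f(\lam_{n-1}(G))\bigr)$, since the extreme values of $|\lam_i(G) - d|$ over $i \ge 1$ are attained at $i=1$ or $i = n-1$. Using the symmetry $f(x) = f(2d-x)$, I can rewrite this as $\min\bigl(f(\lam_1(G)),\, f(2d - \lam_{n-1}(G))\bigr)$. To conclude that this equals $f(\lam') = \lam'(2d - \lam')$, I will run a short case analysis. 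The key observation is that $\lam_{n-1}(G) \ge d$ whenever $d \ge 1$ (the adjacency matrix has trace zero, forcing $\mu_{n-1} \le 0$), so $2d - \lam_{n-1}(G) \le d$; combining this with $\lam_1(G) \le \lam_{n-1}(G)$ and the monotonicity of $f$ on $[0,d]$ shows in every case that the smaller of the two arguments is $\lam'$, at which $f$ attains the minimum. This little sign/range check is the only technical step of the proof. Finally, substituting $\lam_1(H) = \lam'(2d - \lam')$, $r = d$, and $|E(H)| = n$ into Theorem~\ref{T-2} yields exactly the bound asserted in Theorem~\ref{T-N}.
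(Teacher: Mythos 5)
Your proposal is correct and follows essentially the same route as the paper: construct the auxiliary hypergraph $H_G$ with edge multiset $\{N_G(u):u\in V(G)\}$, identify $L(H_G)=d^2I-A^2(G)$, read off its spectrum as $\lam_i(G)(2d-\lam_i(G))$, and substitute $\lam_1(H_G)=\lam'(2d-\lam')$, $r=d$, $|E(H_G)|=n$ into Theorem~\ref{T-2}. The only (harmless) differences are that you compute the diagonal entry $d(d-1)$ by a direct count rather than via the walk-counting Lemma~\ref{L-Walks}, and that you justify the identification $\min\bigl(f(\lam_1(G)),f(\lam_{n-1}(G))\bigr)=f(\lam')$ with an explicit sign/range check that the paper leaves implicit.
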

From this one can obtain bounds analogous to those of Corollary~\ref{C-Main} whenever $G$ is connected and not bipartite.

The rest of the paper is organized as follows. In Section~\ref{sec:pre} we prove some basic facts about the codegree Laplacian $L(H)$.  We then prove Theorems~\ref{T-2} and \ref{T-N} in Section~\ref{sec:converge} along with Corollary~\ref{C-Main}.  In Section~\ref{sec:concentration} we prove Theorem~\ref{T-CReg} and provide some counterexamples to concentration.  We close the paper with a number of open problems in Section~\ref{sec:conclude}.
\section{Preliminaries}\label{sec:pre}
In this section we state and prove several basic results about $L(H)$, all of which are easy generalizations of the analogous results for graphs.  To start, we show that the Raleigh quotient of $L(H)$ has a particularly nice form.  To simplify our lemmas, we adopt the convention that $\sum_{u,v}$ denotes the sums over all unordered pairs $\{u,v\}$ with $u\ne v$.

\begin{lem}\label{L-Ral}
	For $x\ne 0$ a real vector,  \[\f{x^TL(H)x}{x^Tx}=\f{\sum_{u,v} d(u,v)(x_u-x_v)^2}{\norm{x}_2^2}\]
\end{lem}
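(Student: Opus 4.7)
The plan is to verify the identity directly from the definition of $L(H)$ by expanding the quadratic form $x^T L(H) x$ entry by entry, which is a standard manipulation mirroring the classical graph-Laplacian calculation.

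First I would write
\[x^T L(H) x = \sum_u L(H)_{u,u} x_u^2 + \sum_{u\ne v} L(H)_{u,v} x_u x_v,\]
where the second sum now ranges over \emph{ordered} pairs. Substituting $L(H)_{u,u} = \sum_{v\ne u} d(u,v)$ and $L(H)_{u,v} = -d(u,v)$ converts the diagonal contribution into $\sum_u \sum_{v\ne u} d(u,v) x_u^2$ and the off-diagonal contribution into $-\sum_{u\ne v} d(u,v) x_u x_v$.

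Next I would convert both ordered sums to sums over unordered pairs $\{u,v\}$ using the symmetry $d(u,v)=d(v,u)$. Each unordered pair contributes $d(u,v)(x_u^2 + x_v^2)$ to the diagonal sum (since the ordered pair $(u,v)$ gives $d(u,v)x_u^2$ and $(v,u)$ gives $d(u,v)x_v^2$), and each unordered pair contributes $-2d(u,v) x_u x_v$ to the off-diagonal sum. Combining yields
\[x^T L(H) x = \sum_{u,v} d(u,v)\bigl(x_u^2 + x_v^2 - 2 x_u x_v\bigr) = \sum_{u,v} d(u,v)(x_u - x_v)^2.\]
Dividing by $x^T x = \|x\|_2^2$ gives the stated identity.

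There is no real obstacle here; the only place one must be careful is the bookkeeping when switching between ordered and unordered indexing, in particular remembering the factor of $2$ that appears on the off-diagonal cross terms but not on the symmetric $(x_u^2+x_v^2)$ contribution from the diagonal.
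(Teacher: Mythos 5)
Your proof is correct and follows essentially the same route as the paper's: both expand the quadratic form using the definitions of the diagonal and off-diagonal entries of $L(H)$ and then collect the ordered sum into unordered pairs via the symmetry of $d(u,v)$, arriving at $\sum_{u,v} d(u,v)(x_u^2+x_v^2-2x_ux_v)$. The only cosmetic difference is that the paper first computes the vector $(L(H)x)_u=\sum_{v\ne u}d(u,v)(x_u-x_v)$ and then takes the inner product with $x$, whereas you expand $x^TL(H)x$ directly into diagonal and off-diagonal contributions.
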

\begin{proof}
	The denominator is clear.  For the numerator, by definition we have \[(L(H)x)_u=\sum_{v\ne u} d(u,v)x_u-\sum_{v\ne u} d(u,v)x_v=\sum_{v\ne u} d(u,v)(x_u-x_v).\] Thus \[x^T L(H) x=\sum_{u} \sum_{v\ne u} d(u,v)(x_u^2-x_ux_v)\]\[=\sum_{u, v} d(u,v)(x_u^2+x_v^2-2x_ux_v)=\sum_{u, v} d(u,v)(x_u-x_v)^2.\]
\end{proof}

We recall the following well known linear algebra results, which can be found, for example, in \cite{BH}.
\begin{lem}[\cite{BH}]\label{L-CS}
	Let $M$ be a real $n\times n$ symmetric matrix.  Then $M$ has $n$ real eigenvalues $\lam_0\le \cdots \le \lam_{n-1}$ and \[\lam_0=\min_{0\ne x\in \R^n} \f{x^TMx}{x^Tx}.\] Further, any $x_0$ achieving this equality is an eigenvector corresponding to $\lam_0$ and  \[\lam_1=\min_{0\ne x\in \R^n:x\perp x_0}\f{x^TMx}{x^Tx}.\]
\end{lem}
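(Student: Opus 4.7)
The lemma packages together the spectral theorem for real symmetric matrices with the two lowest Rayleigh quotient characterizations, so the plan is to first establish that $M$ is orthogonally diagonalizable and then deduce both variational formulas by expanding arbitrary vectors in the resulting eigenbasis.

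The first step is the spectral theorem, which I would prove by induction on $n$. Extract a minimizer $v_0$ of $x^T M x$ over the unit sphere $\{x : \|x\|_2 = 1\}$; existence follows from compactness. A Lagrange multiplier argument shows $v_0$ is an eigenvector with eigenvalue $\lambda_0 = v_0^T M v_0$, and the hyperplane $v_0^\perp$ is $M$-invariant because for any $w \perp v_0$ one has $v_0^T M w = (M v_0)^T w = \lambda_0 v_0^T w = 0$. Applying the inductive hypothesis to $M$ restricted to $v_0^\perp$ produces an orthonormal eigenbasis $v_0, v_1, \ldots, v_{n-1}$ with real eigenvalues $\lambda_0 \le \lambda_1 \le \cdots \le \lambda_{n-1}$.

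For the first variational formula, expand an arbitrary nonzero $x$ in this basis as $x = \sum_i c_i v_i$. A direct computation gives
\[\frac{x^T M x}{x^T x} = \frac{\sum_i \lambda_i c_i^2}{\sum_i c_i^2},\]
which is a convex combination of $\lambda_0, \ldots, \lambda_{n-1}$ with weights $c_i^2/\|x\|_2^2$. This is at least $\lambda_0$, with equality iff every $i$ carrying nonzero weight satisfies $\lambda_i = \lambda_0$, i.e., iff $x$ lies in the $\lambda_0$-eigenspace. In particular, any minimizer $x_0$ is an eigenvector for $\lambda_0$.

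For the second formula I would split into cases according to the multiplicity of $\lambda_0$. If $\lambda_0 < \lambda_1$, the $\lambda_0$-eigenspace is spanned by $x_0$ alone, so the constraint $x \perp x_0$ forces $c_0 = 0$ in the expansion above; the same convex-combination argument then bounds the Rayleigh quotient below by $\lambda_1$, with equality at $x = v_1$. If $\lambda_0 = \lambda_1$, then the $\lambda_0$-eigenspace has dimension at least two, so one can pick a unit $\lambda_0$-eigenvector orthogonal to $x_0$, which already realizes the Rayleigh quotient $\lambda_0 = \lambda_1$. This degenerate case is the only mildly delicate point; otherwise the argument is entirely routine, which is consistent with the lemma being quoted from \cite{BH} rather than proved in the paper.
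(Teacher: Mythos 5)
The paper does not actually prove this lemma --- it is quoted from \cite{BH} as a known fact --- so there is no in-paper argument to compare against. Your proof is correct and standard: the inductive spectral-theorem construction (compactness plus Lagrange multipliers, invariance of $v_0^\perp$), the convex-combination expansion of the Rayleigh quotient in the eigenbasis, and the case split on the multiplicity of $\lambda_0$ (which is genuinely needed, since when $\lambda_0=\lambda_1$ the constraint $x\perp x_0$ does not force $c_0=0$, and you correctly handle that case by exhibiting a second minimizer inside the bottom eigenspace) are all sound.
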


Putting these lemmas together gives the following.
\begin{lem}\label{L-lam2}
	For all hypergraphs $H$, $\lam_0(H)=0$ and
	\[\lam_1(H)=\min_{0\ne x\in \R^n:\sum x_v=0} \f{\sum_{u,v} d(u,v)(x_u-x_v)^2}{\norm{x}_2^2}.\]
	Moreover, $\lam_1(H)>0$ if and only if $H$ is connected.
\end{lem}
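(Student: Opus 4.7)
My plan is to derive the formula for $\lam_1(H)$ from Lemmas~\ref{L-Ral} and~\ref{L-CS} applied with the eigenvector $\mathbf{1}$, and then settle the connectedness characterization by producing an explicit test vector in one direction and arguing by contradiction in the other.

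First, Lemma~\ref{L-Ral} shows that $x^T L(H) x = \sum_{u,v} d(u,v)(x_u - x_v)^2 \ge 0$ for every $x$, so Lemma~\ref{L-CS} forces every eigenvalue of $L(H)$ to be non-negative; meanwhile the all-ones vector $\mathbf{1}$ annihilates the quadratic form, so $\lam_0(H) = 0$ with $\mathbf{1}$ an associated eigenvector. The stated formula for $\lam_1(H)$ then follows from the second half of Lemma~\ref{L-CS} applied with $x_0 = \mathbf{1}$: the orthogonality condition $x \perp \mathbf{1}$ is exactly $\sum_v x_v = 0$, and one more use of Lemma~\ref{L-Ral} puts the Rayleigh quotient in the desired form.

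For the connectedness claim, suppose first that $H$ is not connected. By the paper's definition there is a non-empty $S \subsetneq V(H)$ so that no edge meets both $S$ and $V(H) \sm S$, meaning $d(u,v) = 0$ for every $u \in S$, $v \notin S$. The vector given by $x_v = |V(H) \sm S|$ for $v \in S$ and $x_v = -|S|$ for $v \notin S$ is non-zero, satisfies $\sum x_v = 0$, and has $x_u = x_v$ whenever $d(u,v) > 0$; plugging it into the quotient above gives zero and so $\lam_1(H) = 0$.

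Conversely, assume $\lam_1(H) = 0$, witnessed by some non-zero $x$ with $\sum x_v = 0$ and $\sum_{u,v} d(u,v)(x_u - x_v)^2 = 0$. Since every summand is non-negative, $x_u = x_v$ whenever $d(u,v) \ge 1$, and in particular $x$ is constant on every edge. Because $x$ is non-zero yet sums to zero it attains at least two distinct values, so picking any value $c$ of $x$ and letting $S = \{v : x_v = c\}$ yields a non-empty proper subset of $V(H)$ with no edge crossing it, contradicting connectedness. The only mildly delicate step in the whole proof is translating between the paper's cut-based definition of connectedness and the edge-constancy condition arising from the vanishing Rayleigh quotient, which is immediate once one notes that any two vertices inside the same edge have codegree at least one.
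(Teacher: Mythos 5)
Your proof is correct and follows essentially the same route as the paper: the same appeal to Lemmas~\ref{L-Ral} and \ref{L-CS} with the all-ones eigenvector, an explicit balanced test vector for the disconnected direction, and extraction of a disconnecting vertex set from a vector with vanishing quadratic form for the converse. The only immaterial differences are the normalization of your test vector and your use of a level set $\{v : x_v = c\}$ where the paper partitions vertices by the sign of $x_v$.
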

\begin{proof}
	Because $L(H)$ is real symmetric, we have from Lemmas~\ref{L-CS} and \ref{L-Ral} that $\lam_0(H)$ is the minimum over non-zero real $x$ of
	\begin{equation}\label{E-Ral}
	\f{\sum_{u, v} d(u,v)(x_u-x_v)^2}{\norm{x}_2^2}
	\end{equation}
	Because the numerator and denominator of \eqref{E-Ral} are sums of squares, $\lam_0(H)\ge 0$.  Moreover, by taking $x=(1,\ldots,1)$ we see that the minimum is exactly 0 and that the all 1's vector is a corresponding eigenvector.  By Lemma~\ref{L-CS}, $\lam_1(H)$ is the minimum of \eqref{E-Ral} subject to $x\perp (1,\ldots,1)$, i.e. subject to $\sum x_u=0$.  From this it follows that $\lam_1(H)=0$ if and only if there exists a non-zero $x$ with $\sum d(u,v)(x_u-x_v)^2=0$ and $\sum x_u=0$, and we claim this happens if and only if $H$ is disconnected.  
	
	Indeed, if $H$ has a subset $S\sub V(H)$ such that every edge contains only vertices in $S$ or $V(H)\sm S$, then we can take the vector $x$ with $x_u=|S|^{-1}$ if $u\in S$ and $x_u=-|V(H)\sm S|^{-1}$ if $u\notin S$; and one can verify that $x$ satisfies the conditions above, proving that $\lam_1(H)=0$.  Conversely, if such an $x$ exists, let $S_1=\{u\in V(H):x_u\ge 0\}$ and $S_2=\{u\in V(H):x_u<0\}$.  Because $x\ne 0$ and $\sum x_u=0$ these two sets are non-empty, and hence both are proper subsets of $V(H)$.  Moreover, there exists no edge $\e\in E(H)$ with $u,v\in e$, $u\in S_1$, and $v\in S_2$, as this would imply $d(u,v)(x_u-x_v)^2>0$.  Thus $S_1\subsetneq V(H)$ shows that $H$ is disconnected as desired.
\end{proof}
\section{Bounding the Rate of Convergence}\label{sec:converge}
It turns out that one can express how much $\norm{\Av{x}{}{}}_2^2$ differs from $\norm{x}_2^2$ in a concise form.
\begin{lem}\label{L-Step}
	For any weight vector $x$ with $\sum x_u=0$,  \[\E\l[\norm{x}_2^2-\norm{\Av{x}{}{}}_2^2\r]=\rec{|E(H)|}\sum_{\e\in E(H)} \rec{|\e|}\sum_{u,v\in \e} (x_u-x_v)^2.\]
\end{lem}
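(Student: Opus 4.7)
The plan is to condition on which edge is selected in the random step, compute the change in squared norm exactly in that conditional world, and then average over the uniform choice of edge. Since each edge $e$ is selected with probability $1/|E(H)|$, the outer factor of $1/|E(H)|$ appears immediately and the whole content of the lemma is the per-edge identity.

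Concretely, first I would fix an edge $e$ and write $y = \Av{x}{}{}$ under the event that $e$ was chosen. Since $y_u = x_u$ for $u \notin e$ and $y_u = |e|^{-1}\sum_{v\in e} x_v =: \bar x_e$ for $u \in e$, the coordinates outside $e$ cancel in $\norm{x}_2^2 - \norm{y}_2^2$, leaving
\[\norm{x}_2^2 - \norm{y}_2^2 \;=\; \sum_{u\in e} x_u^2 \;-\; |e|\,\bar x_e^{\,2}.\]
Next I would recognize this as a variance: $\sum_{u\in e} x_u^2 - |e|\bar x_e^{\,2} = \sum_{u\in e}(x_u-\bar x_e)^2$. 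The final step is the standard identity translating the variance of $|e|$ numbers into a pair-sum,
\[\sum_{u\in e}(x_u-\bar x_e)^2 \;=\; \frac{1}{|e|}\sum_{u,v\in e}(x_u-x_v)^2,\]
where the right side runs over unordered pairs with $u \ne v$ in accordance with the convention of the preliminaries. This is proved in one line by expanding $\sum_{\text{ordered }u,v}(x_u-x_v)^2 = 2|e|\sum_u x_u^2 - 2|e|^2\bar x_e^{\,2}$ and halving to pass from ordered to unordered pairs.

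Putting the pieces together and taking expectation over the uniformly random edge $e$ yields exactly the claimed formula. The main obstacle, such as it is, is purely bookkeeping: keeping straight the convention that $\sum_{u,v}$ runs over unordered pairs (this costs the factor of $2$ one must remember when going from the ordered double sum to the unordered one), and noticing that the coordinates outside of $e$ contribute nothing to the difference of squared norms. I note in passing that the hypothesis $\sum x_u = 0$ is not actually used in the derivation; the identity holds for any weight vector, and presumably the hypothesis is stated only because the lemma will be applied later to centered vectors.
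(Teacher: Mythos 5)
Your proposal is correct and follows essentially the same route as the paper: condition on the chosen edge, observe that coordinates outside $e$ cancel, reduce $\sum_{u\in e}x_u^2-|e|\bar x_e^2$ to the unordered pair-sum $\frac{1}{|e|}\sum_{u,v\in e}(x_u-x_v)^2$, and average over the uniform edge choice (the paper does the middle step by direct expansion rather than via the variance identity, but it is the same computation). Your side remark that the hypothesis $\sum x_u=0$ is never used is also accurate.
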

\begin{proof}
	Assume the edge $\e$ is chosen in the averaging process.  Then 
	\[\norm{x}_2^2-\norm{\Av{x}{}{}}_2^2=\l(\sum_{u\in \e} x_u^2\r)-|\e|\l(\f{\sum_{v\in e} x_v}{|\e|}\r)^2\]\[=\rec{|\e|}\l(\sum_{u\in \e} (|\e|-1)x_u^2-2\sum_{u,v\in \e}x_ux_v\r)=\rec{|\e|}\sum_{u,v\in \e} (x_u-x_v)^2.\]
	As each edge is equally likely to be chosen, we conclude the result.
\end{proof}

With this we can prove our main theorem.

\begin{proof}[Proof of Theorem~\ref{T-2}]
	Let $x'$ be a weight vector.  It is not difficult to see that $\Av{x'}{t}{}-\ol{x}'=\Av{x'-\ol{x}'}{t}{}$.  Thus it is enough to prove the result for $x:=x'-\ol{x'}$, and with this we have $\sum x_u=0$ and $\ol{x}=0$.
	
	By Lemma~\ref{L-Step} and the bound $|\e|\ge r$,  \[\E[\norm{x}_2^2-\norm{\Av{x}{}{}}_2^2]=\rec{|E(H)|}\sum_{\e\in E(H)}\rec{|\e|}\sum_{u,v\in \e} (x_u-x_v)^2\]\[\ge \rec{r|E(H)|}\sum_{\e\in E(H)}\sum_{u,v\in \e}(x_u-x_v)^2=\rec{r|E(H)|}\sum_{u, v} d(u,v)(x_u-x_v)^2.\]  By Lemma~\ref{L-lam2} this quantity is at most $\rec{r|E(H)|}\lam_1(H) \norm{x}_2^2$.  By removing the determinstic value $\norm{x}_2^2$ out of the expectation, we see that the result holds for $t=1$, and the result in general follows by inductively applying the $t=1$ bound.
\end{proof}

\begin{proof}[Proof of Corollary~\ref{C-Main}]
	For the first result, we use the inequality $\E[X]\le \sqrt{\E[X^2]}$, Theorem~\ref{T-2}, and the inequality $1-\tau\le e^{-\tau}$ to conclude that 
	\begin{align*}
	\E\l[\norm{\Av{x}{t}{}-\ol{x}}_2\r]&\le \sqrt{\E[\norm{\Av{x}{t}{}-\ol{x}}_2^2]}\\ &\le \l(1-\f{\lam_1(H)}{r|E(H)|}\r)^{t/2}\norm{x-\ol{x}}_2\le \exp\l(\f{-t\lam_1(H)}{2r|E(H)|}\r)\norm{x-\ol{x}}_2.
	\end{align*}
	Plugging in $t=2c\cdot \f{r|E(H)|}{\lam_1(H)}$ gives the result, and we note that Lemma~\ref{L-lam2} and $H$ connected implies $\lam_1(H)\ne 0$ so this is well defined.
	
	For the second result, we use the Cauchy-Schwarz inequality and Theorem~\ref{T-2} to deduce that \begin{align*}\E\l[\norm{\Av{x}{t}{}-\ol{x}}_1\r]&\le \sqrt{\E\l[n\cdot \norm{\Av{x}{t}{}-\ol{x}}_2^2\r]}\\ &\le \sqrt{n}\l(1-\f{\lam_1(H)}{e(G)r}\r)^{t/2}\norm{x-\ol{x}}_2 \le \exp\l(\half\log(n)-\f{t\lam_1(H)}{2r|E(H)|}\r)\norm{x-\ol{x}}_2.\end{align*}
	Plugging in $t=(\log(n)+2c)\cdot \f{r|E(H)|}{\lam_1(H)}$ gives the result.
\end{proof}

We recall that a walk of length $k$ in a graph $G$ is a sequence of (possibly not distinct) vertices $v_0,v_1,\ldots, v_k$ such that $v_i\sim v_{i+1}$ for all $0\le i<k$.  The following standard result can be found in \cite{BH}.

\begin{lem}[\cite{BH}]\label{L-Walks}
	Let $A(G)$ be the adjacency matrix of a graph.  Then $A^k(G)_{u,v}$ is the number of walks of length $k$ from $u$ to $v$.
\end{lem}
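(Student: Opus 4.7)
The plan is to prove this by straightforward induction on the walk length $k$, exploiting the fact that matrix multiplication decomposes into summing over intermediate vertices, which matches the combinatorial decomposition of a walk as a shorter walk concatenated with an edge.

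For the base case $k=1$, I would simply unpack definitions: $A(G)_{u,v}$ equals $1$ if $uv \in E(G)$ and $0$ otherwise, and a walk of length $1$ from $u$ to $v$ is precisely a single edge $uv$ (of which there is one if $uv \in E(G)$ and none otherwise). So both quantities agree. (If one allows multigraphs, the same argument works with $A(G)_{u,v}$ equal to the edge multiplicity.)

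For the inductive step, assume the result holds for some $k \ge 1$. I would write
\[
A^{k+1}(G)_{u,v} = \sum_{w \in V(G)} A^k(G)_{u,w} \cdot A(G)_{w,v}
\]
and interpret this combinatorially: any walk $v_0, v_1, \ldots, v_{k+1}$ from $u$ to $v$ is uniquely determined by its prefix walk $v_0, \ldots, v_k$ of length $k$ from $u$ to some vertex $w = v_k$, together with the final edge $v_k v_{k+1} = wv$. The number of ways to choose the prefix is $A^k(G)_{u,w}$ by the inductive hypothesis, and the indicator of the final edge being present is $A(G)_{w,v}$. Summing over the intermediate vertex $w$ counts walks of length $k+1$ from $u$ to $v$ exactly once, matching the matrix product above.

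There is no real obstacle here since the argument is a textbook induction; the only subtlety worth flagging is keeping the combinatorial decomposition of walks aligned with the order of matrix multiplication (splitting off the last edge rather than the first), but either convention works by the symmetry of the sum. Hence the result follows by induction on $k$.
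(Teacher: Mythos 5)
Your induction is correct and is the standard textbook argument: the base case $k=1$ is the definition of the adjacency matrix, and the inductive step matches the entrywise formula for $A^{k+1} = A^k A$ with the decomposition of a length-$(k+1)$ walk into a length-$k$ prefix and a final edge. The paper does not prove this lemma at all --- it cites it as a known result from \cite{BH} --- so there is no authorial proof to compare against, but your argument is exactly the one such references give, and it suffices for the paper's application (where $G$ is a simple $d$-regular graph).
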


\begin{proof}[Proof of Theorem~\ref{T-N}]
	Given a simple graph $G$, we define an auxiliary hypergraph $H_G$ by $V(H_G)=V(G)$ and $E(H_G)=\{N_G(u):u\in V(G)\}$.  Observe that $H_G$ has $n$ edges each of size $d$.  It is not difficult to see that  $\AvN{x}{t}{G}$ and $\Av{x}{t}{H_G}$ have the same distribution, so by Theorem~\ref{T-2} we have
	\begin{equation}\label{eq:N}\E[\norm{\AvN{x}{t}{G}-\ol{x}}_2^2]=\E[\norm{\Av{x}{t}{H_G}-\ol{x}}_2^2]\le \l(1-\f{\lam_1(H_G)}{dn}\r)^t \norm{x-\ol{x}}_2^2.\end{equation}
	
	Note that for $u\ne v$, the codegree $d(u,v)$ in $H_G$ is equal to the number of common neighbors of $u$ and $v$ in $G$, which is exactly the number of walks of length 2 from $u$ to $v$ in $G$.  Thus by Lemma~\ref{L-Walks} we have $L(H_G)_{u,v}=-A^2(G)_{u,v}$ for $u\ne v$ and \[L(H_G)_{u,u}=\sum_{v\ne u} A^2(G)_{u,v}=d^2-A^2(G)_{u,u},\] where this last step used that there are $d^2$ total walks of length 2 starting from $u$.  We conclude that $L(H_G)=d^2I-A^2(G)$.  Because $G$ is $d$-regular,  $A(G)=dI-L(G)$, and in particular the eigenvalues of $A^2(G)$ are exactly $(d-\lam_i(G))^2$.  Thus the eigenvalues of $L(H_G)$ will be \[d^2-(d-\lam_i(G))^2=\lam_i(G)(2d-\lam_i(G)).\] The smallest eigenvalue of $L(H_G)$ will be $0$ corresponding to $i=0$, and the second smallest eigenvalue will be \[\min\{\lam_1(G)(2d-\lam_1(G)),\lam_{n-1}(G)(2d-\lam_{n-1}(G))\}=\lam'(2d-\lam'),\]
	and this together with \eqref{eq:N} gives the result.
\end{proof}
\section{Concentration Results}\label{sec:concentration}

\begin{proof}[Proof of Theorem~\ref{T-CReg}]
	For ease of notation we assume $\ol{x}=0$, which we can do by the same argument used in the proof of Theorem~\ref{T-2}. 	Assume $d(u,v)=d$ for all $u\ne v$.  In this case $L(H)=dnI-dJ$ where $J$ is the all 1's matrix.  Thus the all 1's vector together with the $n-1$ vectors $(1,0,\ldots,0,-1,0,\ldots,0)$ form an orthogonal space of eigenvectors for $L(H)$, with the latter eigenvectors all corresponding to the eigenvalue $dn$.  In particular, every vector with $\sum x_u=0$ is an eigenvector corresponding to the eigenvalue $dn$.  Using this and Lemmas~\ref{L-Step} and \ref{L-Ral} gives that for all $x$ with $\sum x_u=0$,
	\begin{align*}\E[\norm{x}_2^2-\norm{\Av{x}{}{}}_2^2]&=\rec{r|E(H)|}\sum_{u\ne v} d(u,v)(x_u-x_v)^2\\ &=\rec{r|E(H)|}\cdot x^TL(H)x=\f{nd}{r|E(H)|}\norm{x}_2^2.\end{align*}
	Pulling out the deterministic value $\norm{x}_2^2$ gives \[\E[\norm{\Av{x}{}{}}_2^2]=\l(1-\f{nd}{r|E(H)|}\r)\norm{x}_2^2.\]  
	
	To complete the proof of \eqref{E-Con} when $t=1$, we must show that $|E(H)|=\f{n(n-1)d}{r(r-1)}$.  To do this, we count the pairs $(\{u,v\},e)$ with $u\ne v$ and $u,v\in e$ in two ways.  We can first choose the pair $\{u,v\}$ in ${n\choose 2}$ ways and then the edge in $d$ ways, or we could choose the edge first in $|E(H)|$ ways and then a pair it contains in ${r\choose 2}$ ways.  This implies that ${n\choose 2}d=|E(H)|{r\choose 2}$, giving the desired result.  The result for general $t$ follows by inductively applying the $t=1$ case.
	
	For the concentration result, define $S^t(x)=(1-\f{r-1}{n-1})^{-t}\cdot \Av{x}{t}{}$, which in particular implies $S^t(x)=(1-\f{r-1}{n-1})^{-1}\cdot  \Av{S^{t-1}(x)}{}{}$.  This together with \eqref{E-Con} implies that given $S^{t-1}(x)$, we have \[\E\l[\norm{S^t(x)}_2^2\r]=\f{\E\l[\norm{\Av{S^{t-1}(x)}{}{}}_2^2\r]}{(1-\f{r-1}{n-1})}=\norm{S^{t-1}(x)}_2^2.\]
	Thus $\norm{S^t(x)}_2^2$ is a non-negative martingale, so its limit exists and is finite almost surely.
\end{proof}

We close this section with some examples where $H$ is not codegree regular and where the conclusions of Theorem~\ref{T-CReg} fail to hold. Here and throughout when we consider weight vectors $x$ on the path graph $P_n$, we let $x_1,x_n$ be the weights of the endpoints of the path.

\begin{prop}\label{P-Path1}
	Let $x$ be the weight vector of $P_3$ with $x=(1,-\half,-\half)$.  Then for all $t\ge 1$,
	\[\Pr\l[\norm{\Av{x}{t}{}}_2^2\ge 2^{-t/2}\r]\ge \half.\]
\end{prop}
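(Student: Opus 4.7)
The plan is to exploit the symmetry of the initial weight vector: since $x_2=x_3=-\half$, averaging along the edge $\{2,3\}$ leaves $x$ unchanged, so many steps of the process are ``wasted,'' which is the source of slow convergence.

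I would first identify the invariant structure. A routine induction on $t$ shows that every vector reachable from $x$ under the process lies in one of two one-parameter families, each parametrised by a scalar $c\in\mathbb{R}$:
\begin{itemize}
  \item \emph{Type A}: $(c,\,-c/2,\,-c/2)$, with squared $\ell^2$ norm $\tfrac{3c^2}{2}$;
  \item \emph{Type B}: $(c/4,\,c/4,\,-c/2)$, with squared $\ell^2$ norm $\tfrac{3c^2}{8}$.
\end{itemize}
(The initial $x$ is Type A with $c=1$.) A direct computation of the two possible edge choices yields the one-step transitions: from Type A with parameter $c$, picking $\{2,3\}$ fixes the vector and picking $\{1,2\}$ sends it to Type B with the same $c$; from Type B with parameter $c$, picking $\{1,2\}$ fixes the vector and picking $\{2,3\}$ sends it to Type A with parameter $c/4$. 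The key observation is that in \emph{either} type, with probability exactly $\half$ the vector is fixed, and with probability exactly $\half$ the squared norm is multiplied by $\tfrac{1}{4}$; moreover these ``shrink or fix'' outcomes are i.i.d.\ across successive steps.

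Writing $K_t$ for the number of shrinking steps among the first $t$ trials, we obtain $K_t\sim\mathrm{Bin}(t,\half)$ and
\[
\norm{\Av{x}{t}{}}_2^2 \;=\; \tfrac{3}{2}\cdot 4^{-K_t}.
\]
Thus the event $\l\{\norm{\Av{x}{t}{}}_2^2\ge 2^{-t/2}\r\}$ is equivalent to a lower-tail event $\{K_t\le k^*(t)\}$ for an explicit threshold $k^*(t)$ obtained by solving $\tfrac{3}{2}\cdot 4^{-k^*}\ge 2^{-t/2}$. I would finish by invoking the standard fact that the median of $\mathrm{Bin}(t,\half)$ lies at $\lfloor t/2\rfloor$, which gives $\Pr[K_t\le \lfloor t/2\rfloor]\ge\half$, and checking that $\lfloor t/2\rfloor\le k^*(t)$ for every $t\ge 1$.

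The main obstacle is not the probabilistic tail bound but rather pinning down the combinatorial structure above: the two-family parametrisation and the decoupling that the shrink-or-fix indicator is Bernoulli$(\half)$ uniformly in the current type, so that $K_t$ really is a clean $\mathrm{Bin}(t,\half)$. Once that reduction to a one-dimensional binomial is in place, the proposition becomes a one-line median estimate.
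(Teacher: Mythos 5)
Your reduction is the same one the paper uses: let $K_t$ be the number of steps that actually change the vector, observe that in every reachable state exactly one of the two edges is active so that $K_t\sim\mathrm{Bin}(t,\half)$ with the change/no-change indicators independent of the current type, express the norm as a function of $K_t$, and finish with the median of the binomial. Your structural analysis is correct and is in fact sharper than what the paper writes: the two-family parametrisation gives exactly $\norm{\Av{x}{t}{}}_2^2=\f{3}{2}\cdot 4^{-K_t}$, whereas the paper only asserts $\norm{\Av{x}{t}{}}_2^2\ge 2^{-D(t)}$ (its $D(t)$ is your $K_t$), an inequality that is false whenever $D(t)\ge 1$ because $\f{3}{2}\cdot 4^{-D}<2^{-D}$ there.

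The gap is in your final sentence. Solving $\f{3}{2}\cdot 4^{-k}\ge 2^{-t/2}$ gives $k\le \f{t}{4}+\half\log_2\f{3}{2}$, so $k^*(t)$ is roughly $t/4$, and the check ``$\lfloor t/2\rfloor\le k^*(t)$'' fails for $t\ge 2$; the median bound $\Pr[K_t\le\lfloor t/2\rfloor]\ge\half$ therefore does not reach the required event. Carried through honestly, your (correct) formula shows the proposition as literally stated is false: at $t=2$ the four equally likely histories give $\norm{\Av{x}{2}{}}_2^2\in\{\f{3}{2},\f{3}{8},\f{3}{8},\f{3}{32}\}$, only the first of which is at least $2^{-1}$, so the probability is $\quart$. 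The statement, and with it both your argument and the paper's, is repaired by replacing $2^{-t/2}$ with $2^{-t}$ (equivalently, by bounding the unsquared norm $\norm{\Av{x}{t}{}}_2\ge 2^{-t/2}$): the threshold then becomes $k\le \f{t}{2}+\half\log_2\f{3}{2}$, which is implied by $k\le\lfloor t/2\rfloor$, and your median estimate finishes the proof exactly as you intended. So your only error is the unverified numerical check at the end---but that check is precisely where the content of the proposition lives, and as stated it does not hold.
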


In contrast, Theorem~\ref{T-CReg} would predict that $\E[\norm{\Av{x}{t}{}}_2^2](2-\ep)^t$ tends to 0 for all $\ep>0$ if $P_3$ were codegree regular.

\begin{proof}
Let $D(t)$ denote the number of $s$ with $1\le s\le t$ such that $\Av{x}{s}{}\ne \Av{x}{s-1}{}$.  One can prove by induction that if $D(t)$ is even then $\Av{x}{t}{}=(2^{- D(t)},-2^{-D(t)-1},-2^{-D(t)-1})$ and otherwise $\Av{x}{t}{}=(2^{-D(t)-1},2^{-D(t)-1},-2^{-D(t)})$.  In particular, given $D(t)$ we have $\norm{\Av{x}{t}{}}_2^2\ge 2^{-D(t)}$.  Thus it is enough to show that $\Pr[D(t)\le t/2]\ge \half$.  It is not difficult to see that the distribution of $D(t)$ is binomial with $t$ trials and probability $\half$ of successes (each round has probability $\half$ of choosing the one edge that will change $\Av{x}{t}{}$).  Thus this statement is equivalent to showing that $\sum_{i=0}^{t/2} {t\choose i}\ge 2^{t-1}$, which is easy to prove by the symmetry of the binomial coefficients.
\end{proof}
 A similar example shows that there exist $x$ such that $\Av{x}{t}{}$ can exhibit different long term behaviors.

\begin{prop}\label{P-Path2}
	Let $x$ be the weight vector of $P_3$ with $x=(1,-1,0)$. Then for all $t\ge 1$, \[\Pr\l[\norm{\Av{x}{t}{}}_2^2=0\r]= \half,\]\[\Pr\l[\norm{\Av{x}{t}{}}_2^2\ge 2^{-t}\r]=\half.\]
\end{prop}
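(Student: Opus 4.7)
The plan is to condition on the outcome of the first step. With probability $\frac{1}{2}$ the edge $\{1,2\}$ is chosen, which sends $x=(1,-1,0)$ to the zero vector; from then on the process remains at $0$ for all subsequent times, which immediately yields $\Pr\bigl[\|\Av{x}{t}{}\|_2^2 = 0\bigr]\ge\frac{1}{2}$ and contributes $0$ to $\Pr\bigl[\|\Av{x}{t}{}\|_2^2\ge 2^{-t}\bigr]$. Otherwise (with the complementary probability $\frac{1}{2}$), the edge $\{2,3\}$ is picked first and the state becomes $(1,-\frac{1}{2},-\frac{1}{2})$, which is precisely the initial vector studied in Proposition~\ref{P-Path1}.

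Once we are in the state $(1,-\frac{1}{2},-\frac{1}{2})$, I would invoke the same inductive description as in the proof of Proposition~\ref{P-Path1}: letting $D'(t-1)$ count the active (state-changing) steps among the remaining $t-1$ rounds, the state at time $t$ equals $(2^{-D'(t-1)},-2^{-D'(t-1)-1},-2^{-D'(t-1)-1})$ when $D'(t-1)$ is even and $(2^{-D'(t-1)-1},2^{-D'(t-1)-1},-2^{-D'(t-1)})$ when $D'(t-1)$ is odd. In either parity, all three coordinates are nonzero dyadic rationals, so the process cannot reach $(0,0,0)$ after starting from $(1,-\frac{1}{2},-\frac{1}{2})$. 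Together with the first-step analysis, this upgrades the first bound to the equality $\Pr\bigl[\|\Av{x}{t}{}\|_2^2=0\bigr]=\frac{1}{2}$.

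For the second claim, the explicit formulas above yield $\|\Av{x}{t}{}\|_2^2\ge 2^{-D'(t-1)}$ whenever the first step was $\{2,3\}$, exactly as in the proof of Proposition~\ref{P-Path1}. Since $D'(t-1)\le t-1<t$ holds deterministically, we conclude that $\|\Av{x}{t}{}\|_2^2\ge 2^{-t}$ holds deterministically in this branch. Summing the contributions of the two first-step outcomes then gives the required equality $\Pr\bigl[\|\Av{x}{t}{}\|_2^2\ge 2^{-t}\bigr]=\frac{1}{2}$.

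The core technical ingredient --- the explicit closed form for the state indexed by the number of active steps --- is exactly the inductive computation already carried out in the proof of Proposition~\ref{P-Path1}, so the main obstacle is not the algebra. The only genuinely new work is the first-step conditioning and the verification that, once the process enters $(1,-\frac{1}{2},-\frac{1}{2})$, it never revisits $(0,0,0)$; both are immediate from the parity-indexed closed form.
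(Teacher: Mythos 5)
Your proposal is correct and follows essentially the same route as the paper: condition on the first edge chosen, note that $\{1,2\}$ sends the state to zero permanently, and otherwise reduce to the state $(1,-\frac{1}{2},-\frac{1}{2})$ analyzed in Proposition~\ref{P-Path1}, whose closed form gives $\norm{\Av{x}{t}{}}_2^2\ge 2^{-t}>0$ on that branch. Your explicit remark that the nonzero branch never returns to the zero vector is a slightly more careful justification of the equality than the paper spells out, but the argument is the same.
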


\begin{proof}
	Let $x=(1,-1,0)$. With probability $\half$ the edge $\{1,2\}$ is chosen first, and then for all $t\ge 1$ we have $\Av{x}{t}{}=0$.  If $\{2,3\}$ is chosen first then $\Av{x}{1}{}=(1,-\half,-\half)$.  In this case, the same reasoning as in the previous proof shows that $\norm{\Av{x}{t+1}{}}_2^2\ge 2^{-D(t)}$ with $D(t)$ a random variable that is at most $t$ (we shift $t$ by 1 here because this is the second step of this random process).  In particular, we have $\norm{\Av{x}{t}{}}_2^2\ge 2^{-t}$ for all $t$ in this case.
\end{proof}

\section{Concluding Remarks}\label{sec:conclude}
For ease of presentation, whenever $H$ is understood we define 
\[\Del_1(t,x)=\norm{\Av{x}{t}{H}-\ol{x}}_1.\]
The second half of Corollary~\ref{C-Main} shows that in expectation $\Del_1(t,x)$ will be small provided $t\approx \f{r |E(H)|\log(n)}{\lam_1(H)}$, and this bound is essentially tight for the complete graph $K_n$ due to work of Chaterjee, Diaconis, Sly, and Zhang~\cite{CD}.  It is not clear whether these bounds are tight for all hypergraphs, or even for all graphs, and we ask the following somewhat vague question.

\begin{quest}
	When are the bounds in Corollary~\ref{C-Main} essentially tight?
\end{quest}
We give two concrete conjectures in this direction. Let $S_n$ be the star graph on $n+1$ vertices.  Note that $|E(S_n)|=n$ and $\lam_1(G)=1$, so Corollary~\ref{C-Main} shows that for any $x$ we have $\Del_1(t,x)\approx 0$ in expectation whenever $t\approx 2n\log(n)$.  We suspect that this is tight.

\begin{conj}
	Let $x$ be the weight vector on $S_n$ which gives weight $1-\rec{n+1}$ to the central vertex and weight $-\rec{n+1}$ to every other vertex.  Then for $t=o(n\log n)$ we have \[\E[\Del_1(t,x)]\sim\norm{x-\ol{x}}_2\]
\end{conj}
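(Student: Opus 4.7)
The plan combines a spectral decomposition of the initial state with an $L^\infty$-controlled conversion of $L^2$ estimates into $L^1$ lower bounds.

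First, observe that the initial vector $x=\frac{1}{n+1}(n,-1,\ldots,-1)$ is, up to a scalar, the eigenvector of $L(S_n)$ corresponding to $\lam_{n-1}(S_n)=n+1$, and therefore an eigenvector of the expected one-step operator $\bar A=\frac{1}{n}\sum_{\e}A_{\e}$ with eigenvalue $\mu_f=(n-1)/(2n)\approx 1/2$. Consequently $\E[\Av{x}{t}{}]$ decays geometrically at rate $2^{-t}$. A single random averaging step, however, injects substantial content into the $(n-1)$-dimensional slow eigenspace $V_s$ of $L(S_n)$ corresponding to $\lam_1(S_n)=1$ (the space of vectors supported on the leaves and summing to zero); writing $\Av{x}{1}{}=\alpha_1 f+s_1$ with $f$ the unit eigenvector of $\bar A$ for $\mu_f$ and $s_1\in V_s$, a direct calculation gives $\E[\norm{s_1}_2^2]\approx \frac{1}{4}\norm{x-\ol{x}}_2^2$.

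Second, the slow-mode content then decays only at the rate $\mu_s=1-1/(2n)$ per step under $\bar A$. Using Lemma~\ref{L-Step} and the splitting $\Av{x}{t}{}=\alpha_t f+s_t$, one derives a pair of coupled linear recursions for the second moments $\E[\alpha_t^2]$ and $\E[\norm{s_t}_2^2]$; solving these yields $\E[\norm{\Av{x}{t}{}-\ol{x}}_2^2]$ bounded below by a positive multiple of $\norm{x-\ol{x}}_2^2$ whenever $t/n$ stays bounded, and more generally by a quantity of order $e^{-O(t/n)}\norm{x-\ol{x}}_2^2$ for all $t=o(n\log n)$.

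Third, to convert this $L^2$ information into an $L^1$ lower bound, use the deterministic fact that averaging never increases the supremum norm, so $\norm{\Av{x}{t}{}}_\infty\le\norm{x}_\infty=n/(n+1)\le 1$ for every $t$. The elementary Cauchy--Schwarz inequality $\norm{y}_1\ge\norm{y}_2^2/\norm{y}_\infty$ then gives
\[
\E[\Del_1(t,x)]\ge\E[\norm{\Av{x}{t}{}-\ol{x}}_2^2],
\]
which combined with the previous step yields a lower bound of the correct order of magnitude in the regime $t=O(n)$.

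The main obstacle is upgrading this order-of-magnitude lower bound to the sharp $\sim\norm{x-\ol{x}}_2$ asymptotic, together with a matching upper bound, and extending everything across the full range $t=o(n\log n)$ including $t\gg n$. Both issues reduce to controlling the spatial profile of the slow-mode vector $s_t$: one must show that $s_t$ retains a spread structure in which $\norm{s_t}_1/\norm{s_t}_2$ stabilizes at the appropriate target value (rather than degenerating to $1$ by concentrating on a single coordinate, or blowing up to $\sqrt{n}$), and simultaneously that $\norm{s_t}_2$ does not decay faster than $(1-1/(2n))^{t/2}$. The slow eigenspace is invariant under $\bar A$ and the spread structure is preserved in expectation, but individual random steps $A_{\e}$ can redistribute mass between leaves when the same leaf is touched repeatedly; a quantitative second-moment analysis of the individual leaf coordinates of $\Av{x}{t}{}$, tied to the coupon-collector statistics of which leaves have been visited by time $t$, is the natural approach, and is precisely where the coupon-collector time scale $n\log n$ enters the conjecture substantively.
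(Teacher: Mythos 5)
The statement you are trying to prove is one of the paper's open \emph{conjectures}: the author gives no proof, only the simulation in Figure~\ref{FS} as evidence, so there is no argument of the paper's to compare yours against. The only question is whether your sketch settles the conjecture on its own, and it does not --- you flag the decisive difficulty yourself and leave it open. To give credit where it is due, your preliminary observations are correct: $x$ is (up to scale) the eigenvector of $L(S_n)$ for the top eigenvalue $n+1$; the mean one-step operator is $I-\rec{2n}L(S_n)$, so the fast mode contracts by $\f{n-1}{2n}\approx\half$ per step while the slow eigenspace (mean zero on the leaves, zero at the center) contracts only by $1-\rec{2n}$; one step deposits roughly $\quart\norm{x}_2^2$ of energy into the slow space; and $\norm{y}_1\ge\norm{y}_2^2/\norm{y}_\infty$ together with the fact that averaging never increases $\norm{\cdot}_\infty$ turns second-moment lower bounds into $\ell_1$ lower bounds. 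In fact the second-moment recursion you allude to closes \emph{exactly} on the star: by Lemma~\ref{L-Step} the expected one-step drop of $\norm{\cdot}_2^2$ equals $\rec{2n}\l((n+1)x_c^2+\norm{x}_2^2\r)$, where $x_c$ is the center weight, and $\E[x_c^2]$ obeys its own linear recursion in $\E[x_c^2]$ and $\E[\norm{\cdot}_2^2]$, so $\E\l[\norm{\Av{x}{t}{}-\ol{x}}_2^2\r]$ is computable in closed form (it tends to $\f{1}{3}\norm{x}_2^2$ for $1\ll t\ll n$ and decays like $e^{-\Theta(t/n)}$ thereafter).

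The gap is that this machinery provably cannot reach the conjectured conclusion. In the regime $n\ll t\ll n\log n$ the second moment $\E\l[\norm{\Av{x}{t}{}}_2^2\r]=\Theta(e^{-ct/n})$ tends to $0$, so your Cauchy--Schwarz step yields only $\E[\Del_1(t,x)]\ge e^{-o(\log n)}$, which is not bounded below by any constant, let alone asymptotic to $\norm{x-\ol{x}}_2$. A proof must show that the $\ell_1$ mass survives even while the $\ell_2$ mass decays, i.e.\ that $\norm{\Av{x}{t}{}}_1^2/\norm{\Av{x}{t}{}}_2^2$ grows like $e^{\Theta(t/n)}$ as the positive weight spreads over more leaves; this is exactly the ``profile control'' you defer to future work, and it is the only place the coupon-collector threshold $n\log n$ can actually enter. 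Even in the easier regime $t=O(n)$ your method gives a constant-factor lower bound ($\gtrsim\f{1}{3}$), not the asymptotic equality $\sim\norm{x-\ol{x}}_2$; note in this connection that $\Del_1(1,x)=\f{2(n-1)}{n+1}\to 2\norm{x-\ol{x}}_2$, so a genuine proof must also pin down the limiting constant, which your approach does not address (and which may well differ from the conjectured one). In short: a reasonable reduction of the conjecture to a concrete open question about the spatial profile of the leaf weights, but not a proof.
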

Figure~\ref{FS} shows a plot of $\Del_1(t,x)$ for this $x$ and $S_{1000}$.  Note that in this case $2n\log(n)\approx 13,800$, and it does appear to take this long for $\Del_1(t,x)$ to converge to 0.

\begin{figure}[h]
	\centering
	\includegraphics[width=0.8\textwidth]{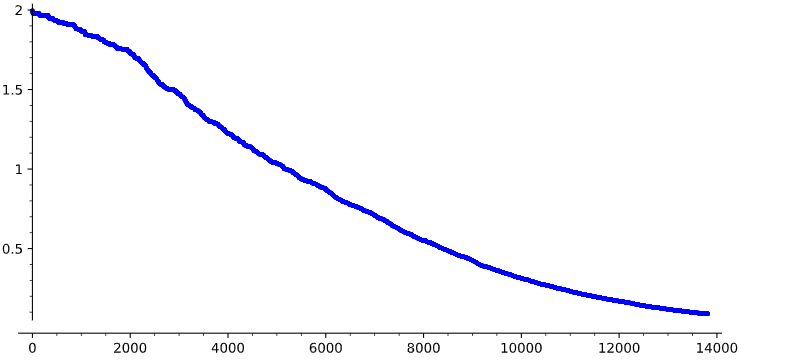}
	\caption{$\Del_1(t,x)$ for $S_{1000}$ with $x_1=1-\f{1}{1001}$ and $x_{i}=\f{-1}{1001}$ for all other $i$.}\label{FS}
\end{figure}

On the other hand, we do not expect the bound of Corollary~\ref{C-Main} to be tight for paths. If $P_n$ is the path graph on $n$ vertices, then $\lam_1(P_n)=2-2\cos(\pi/n)=\Theta(n^{-2})$ and Corollary~\ref{C-Main} implies that $\Del_1(t,x)$ will be small for $t=\Theta(n^3\log(n))$.   Figure~\ref{F1} gives a plot of $\Del_1(t,x)$ when $x$ is the weight vector of $P_{40}$ taking value $\pm 1$ on each endpoint of the path and 0 everywhere else; and Figure~\ref{F2} shows a plot when $x$ has weight $1-1/40$ on one endpoint and $-1/40$ on every vertex.  Note that both processes seem to converge within $n^3=64,000$ steps. These results motivate us to conjecture that  the $\log(n)$ term in Corollary~\ref{C-Main} is not necessary for the path.

\begin{figure}[h]
	
	\centering
	\includegraphics[width=0.8\textwidth]{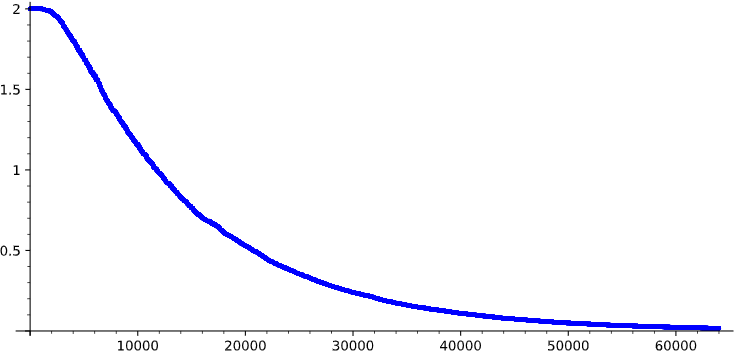}
	\caption{$\Del_1(t,x)$ for $P_{40}$ with $x_1=1,\ x_{40}=-1 $ and $0$ elsewhere.}\label{F1}
\end{figure}

\begin{figure}[h]
	
	\centering
	\includegraphics[width=0.8\textwidth]{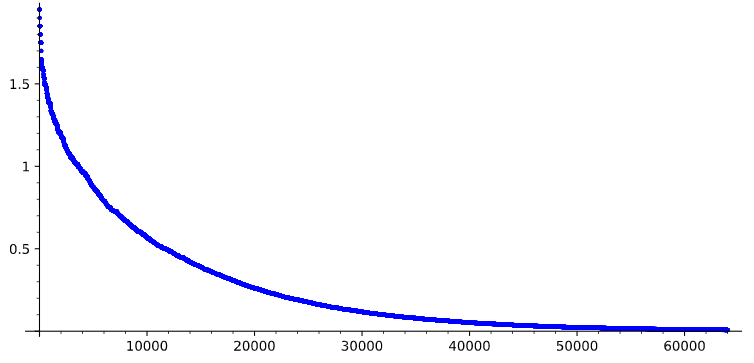}
	\caption{$\Del_1(t,x)$ for $P_{40}$ with $x_1=1-\rec{40}$ and $x_{i}=\f{-1}{40}$ for all other $i$.}\label{F2}
\end{figure}

\begin{conj}
	For all $c>0$, there exists a constant $M=M(c)$ such that for all weight vectors $x$ of $P_n$, we have \[\E[\Del_1(Mn^3,x)]\le e^{-c}\norm{x-\ol{x}}_2.\]
\end{conj}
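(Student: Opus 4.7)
The plan is to attack this by refining the $\ell_2\to\ell_1$ passage in the proof of Corollary~\ref{C-Main}, exploiting more of the spectrum of $L(P_n)$ than just $\lambda_1(P_n)$. Write $x-\ol{x} = \sum_{k=1}^{n-1} c_k\phi_k$ in the orthonormal Laplacian eigenbasis $\phi_k(j) = \sqrt{2/n}\cos\bigl(\pi k(j-\tfrac12)/n\bigr)$ with eigenvalues $\mu_k = 4\sin^2\bigl(\pi k/(2n)\bigr)\asymp k^2/n^2$, and set $v^{(t)} := \Av{x}{t}{} - \ol{x}$. Since $\E[\Av{y}{}{}\mid y]= (I - L/(2(n-1)))y$, iterating gives $\E\bigl[\langle v^{(t)},\phi_k\rangle\bigr] = \bigl(1-\mu_k/(2(n-1))\bigr)^t c_k$. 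The key technical step is to upgrade this first moment identity to a per-mode second moment bound of the form $\E\bigl[\langle v^{(t)},\phi_k\rangle^2\bigr]\lesssim e^{-t\mu_k/(2(n-1))}c_k^2$, via a variance recursion parallel to Lemma~\ref{L-Step}; the main subtlety is that the random averaging operator does not preserve the Fourier decomposition, so one must track cross-mode covariances through a linear recursion on the matrix $\E[v^{(t)}(v^{(t)})^T]$.

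Granted such a bound, combining $\|\phi_k\|_1 = \Theta(\sqrt n)$, $\E|Y|\le\sqrt{\E Y^2}$, and $\|v^{(t)}\|_1\le\sum_k|\langle v^{(t)},\phi_k\rangle|\,\|\phi_k\|_1$ yields
\[
\E\bigl[\Del_1(t,x)\bigr]\ \le\ O(\sqrt n)\,\sum_{k=1}^{n-1}e^{-t\mu_k/(4(n-1))}|c_k|.
\]
For $t=Mn^3$ the series is dominated by its $k=1$ term, producing a bound of order $\sqrt n\,e^{-M\pi^2/4}\,\|x-\ol{x}\|_2$. This matches Corollary~\ref{C-Main} but does \emph{not} eliminate the $\log n$ factor.

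The main obstacle is structural rather than technical, and in fact appears fatal to the conjecture as literally stated: Jensen applied to $\|\cdot\|_1$ gives $\E\bigl[\|v^{(t)}\|_1\bigr]\ge\|\E[v^{(t)}]\|_1$, and taking $x=\phi_1$ yields
\[
\E\bigl[\Del_1(Mn^3,\phi_1)\bigr]\ \ge\ \bigl(1-\mu_1/(2(n-1))\bigr)^{Mn^3}\|\phi_1\|_1\ =\ \Theta(\sqrt n)\,e^{-M\pi^2/2},
\]
while $\|\phi_1-\ol{\phi_1}\|_2 = 1$. Hence $\sqrt n\,e^{-M\pi^2/2}\le e^{-c}$ forces $M\gtrsim\tfrac{1}{\pi^2}\log n$, so the promised $M=M(c)$ independent of $n$ cannot hold whenever the initial vector has first Fourier coefficient of size comparable to $\|x-\ol{x}\|_2$.

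A natural repair, which I expect to be provable along the spectral lines above, is to replace $\|x-\ol{x}\|_2$ on the right-hand side by $\|x-\ol{x}\|_1$. H\"older then gives $|c_k|\le\|x-\ol{x}\|_1\|\phi_k\|_\infty = \sqrt{2/n}\,\|x-\ol{x}\|_1$, absorbing the troublesome $\sqrt n$ in the upper bound of the second paragraph and collapsing the geometric sum to $\E\bigl[\Del_1(Mn^3,x)\bigr]\le C\,e^{-M\pi^2/4}\|x-\ol{x}\|_1$, so that $M=O(c)$ suffices. Alternatively, restricting to initial data with $|c_1|\le n^{-1/2}\|x-\ol{x}\|_2$, a class containing both figure examples in the paper and any suitably localized initial data, lets the spectral argument close in the stated form. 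In either version, the remaining work is the per-mode second moment bound, which is the genuine technical input; everything else is Fourier bookkeeping.
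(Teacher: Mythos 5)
This statement is one of the paper's conjectures, so there is no proof of record to compare you against; I can only judge the proposal on its own terms. Its most solid component is the lower bound, and I believe it is correct — indeed it \emph{refutes} the conjecture as literally stated. Since the edge used at each step is chosen independently of the current weights, $\E[\Av{x}{t}{}-\ol{x}]=\bigl(I-\tfrac{1}{2(n-1)}L(P_n)\bigr)^{t}(x-\ol{x})$, and Jensen's inequality gives $\E[\Del_1(t,x)]\ge \norm{\E[\Av{x}{t}{}-\ol{x}]}_1$. Taking $x=\phi_1$, a unit eigenvector of $L(P_n)$ for $\lam_1(P_n)=2-2\cos(\pi/n)$, one has $\ol{\phi_1}=0$, $\norm{\phi_1}_2=1$, $\norm{\phi_1}_1=\Theta(\sqrt{n})$, and $\bigl(1-\lam_1(P_n)/(2(n-1))\bigr)^{Mn^3}=e^{-M\pi^2/2+o(1)}$, so $\E[\Del_1(Mn^3,\phi_1)]=\Omega\bigl(\sqrt{n}\,e^{-M\pi^2/2}\bigr)$ while $\norm{\phi_1-\ol{\phi_1}}_2=1$. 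Hence $M\ge \pi^{-2}\log n+\Omega_c(1)$ is forced, so no $M=M(c)$ independent of $n$ can work; moreover this shows the $\log n$ in Corollary~\ref{C-Main} is in fact necessary for $P_n$ (up to a factor of about $2$ in the constant) when the error is compared against $\norm{x-\ol{x}}_2$. This elementary observation partially answers the tightness question posed in Section~\ref{sec:conclude} and deserves to be recorded as a proposition in its own right; note that it is invisible to the paper's numerical experiments, whose initial vectors have first Fourier coefficient of size only $O(n^{-1/2})\norm{x-\ol{x}}_2$.

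The positive half of your proposal — a repaired statement with $\norm{x-\ol{x}}_1$ on the right, or restricted to initial data with small first Fourier coefficient — is not established, and the gap sits exactly where you flag it. The asserted per-mode bound $\E[\langle v^{(t)},\phi_k\rangle^2]\lesssim e^{-t\mu_k/(2(n-1))}c_k^2$ cannot hold in that form: if $c_k=0$ but $c_j\ne 0$ for some $j\ne k$, the right-hand side vanishes while a single step already gives mode $k$ positive variance, because $L_e\phi_j$ has a nonzero component along $\phi_k$ for a typical edge $e$. Any true second-moment statement must couple all modes, i.e.\ control the full linear recursion satisfied by $\E[v^{(t)}(v^{(t)})^T]$, and bounding that mode-mixing is precisely the hard analytic content (it is what Chatterjee, Diaconis, Sly and Zhang had to confront even for $K_n$, where symmetry collapses the recursion to a scalar one). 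Lemma~\ref{L-Step} only gives the trace of this recursion, which is exactly why the paper's method cannot see past $\lam_1$. In summary: your negative result stands and should be written up; your positive program is a sensible plan, but it currently rests on a false intermediate claim and the repaired conjectures remain open.
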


We now turn our attention to the neighborhood averaging process.  By adapting the proof of Theorem~\ref{T-N} one can obtain bounds on the convergence of $\AvN{x}{t}{G}$ for all graphs $G$ in terms of $\lam_1(H_G)$, where $H_G$ is the auxiliary hypergraph introduced in the proof of Theorem~\ref{T-N}; and more precisely one can show that $\AvN{x}{t}{G}$ will always converge to $\ol{x}$ if and only if $G$ is connected and not bipartite.

Unfortunately, it is impossible to express $\lam_1(H_G)$ in terms of $\lam_i(G)$ for general graphs $G$.  Indeed, it is well known that the eigenvalues of the Laplacian $L(G)$ can not detect whether $G$ is bipartite in general, so in particular it can not detect whether $\AvN{x}{t}{G}$ will always converge to $\ol{x}$.  However, it may be possible to express $\lam_1(H_G)$ in terms of eigenvalues of a different matrix associated to $G$.  The most natural candidate would be the normalized Laplacian $\c{L}(G)$ since its eigenvalues can detect whether $G$ is connected and bipartite in general; see the survey of Butler and Chung~\cite{BC} for more on the normalized Laplacian.  With this in mind we pose the following question.

\begin{quest}
	For any connected and not bipartite graph $G$, can one  bound the convergence of $\AvN{x}{t}{G}$ in terms of the eigenvalues of the normalized Laplacian matrix $\c{L}(G)$?
\end{quest}

\section*{Acknowledgments}
The author would like to thank Fan Chung for suggesting this problem.  We thank her and a referee for helpful comments on earlier drafts of this paper.  This material is based upon work supported by the National Science Foundation Graduate Research Fellowship under Grant No. DGE-1650112.

\bibliographystyle{abbrv}
\bibliography{Average}

\end{document}